\begin{document}

\title*{Numerical Approximation of the logarithmic Laplacian via sinc-basis}
\author{Patrick Dondl\orcidID{0000-0003-3035-7230} and\\ Ludwig Striet\orcidID{0000-0003-0625-6384}}

\institute{Patrick Dondl \at University of Freiburg, Department for Applied Mathematics, Hermann-Herder-Straße 10 79104 Freiburg, Germany \email{patrick.dondl@mathematik.uni-freiburg.de}
\and Ludwig Striet \at University of Freiburg, Department for Applied Mathematics, Hermann-Herder-Straße 10 79104 Freiburg, Germany \email{ludwig.striet@mathematik.uni-freiburg.de}}

\maketitle

\abstract*{In recent works, the authors of this chapter have shown with co-authors 
	how a basis consisting of dilated and shifted $\text{sinc}$-functions can be used to
	solve fractional partial differential equations. As a model problem, the fractional
	Dirichlet problem with homogeneous exterior value conditions was solved. In this
	work, we briefly recap the algorithms developed there and that -- from a computational
	point of view -- they can be used to solve nonlocal equations given through different
	operators as well. As an example, we numerically solve the Dirichlet problem
	for the logarithmic Laplacian $\log(-\Delta)$ which has the Fourier symbol $\log(\left|\omega\right|^2)$
	and compute its Eigenvalues on disks with different radii in $\mathbb R^2$.
}

\abstract{In recent works, the authors of this chapter have shown with co-authors 
	how a basis consisting of dilated and shifted $\text{sinc}$-functions can be used to
	solve fractional partial differential equations. As a model problem, the fractional
	Dirichlet problem with homogeneous exterior value conditions was solved. In this
	work, we briefly recap the algorithms developed there and that -- from a computational
	point of view -- they can be used to solve nonlocal equations given through different
	operators as well. As an example, we numerically solve the Dirichlet problem
	for the logarithmic Laplacian $\log(-\Delta)$ which has the Fourier symbol $\log(\left|\omega\right|^2)$
	and compute its Eigenvalues on disks with different radii in $\mathbb R^2$.
}

\section{Introduction}
In recent years, nonlocal equations have been a widely studied topic in pure
and applied mathematics. A prototypical example for such an equation is: find
$u$ such that
\begin{equation}
	\label{eq:bvp}
	\begin{aligned}
		{(-\Delta)}^s u &= f\text{ in }\Omega\\
		\quad u &= 0\text{ in }\mathbb R^d\setminus\Omega.
	\end{aligned}
\end{equation}
Above, $f$ is a right-hand side defined on an open, bounded domain $\Omega$. For $s\in(0,1)$, the operator
${(-\Delta)}^s$ is the \emph{integral fractional Laplacian} which can be defined as
\begin{equation*}
	{(-\Delta)}^s u(x) \coloneqq -C(d,s) \text{P.V.}\int_{\mathbb{R}^d} \frac{u(x)-u(y)}{\left|x-y\right|^{d+2s}}\mathop{\mathrm{d}y}.
\end{equation*}
This definition exposes major challenges in the development of numerical
methods for problem of the form of \cref{eq:bvp}: to evaluate the operator
${(-\Delta)}^s u(x)$ at a single point $x\in\mathbb{R}^d$, a singular integral over the whole
$\mathbb{R}^d$ has to be computed. Furthermore, the definition exposes why conditions on
$u$ have to be imposed on $\mathbb{R}^d\setminus\Omega$ instead of only on the boundary
of $\Omega$, thus making them \emph{exterior value conditions.}

Driven by the number of applications of such models, many numerical methods to solve
equations of that kind have been proposed in literature in recent years.

This includes, among others, finite difference methods such as presented in
\cite{Huang2014,Huang2016,Han2022}. In \cite{Valdinoci2009}, the authors derive
the fractional Laplacian as the limit of the long-jump random walk on a grid.
This could also be used to derive the weights $w_k = \left|k\right|^{-(n+\alpha)}$
for a finite difference scheme. Collocation methods have been presented in
\cite{Rosenfeld2019,Burkardt2020,Zhuang2022}. Another large class of methods are
Galerkin methods. Regularity theory and implementation details for a $\mathbb P^1$ finite
element method for \cref{eq:bvp} has been given in \cite{Borthagaray2017}. More aspects
of such methods, including efficient implementations, have been provided in
\cite{Ainsworth2017,Ainsworth2018}. In \cite{Bonito2019}, the authors present
a non-conforming finite element method that amounts in solving a set of elliptic
problems on accordingly truncated domains.  In \cite{Faustmann2023}, the authors show that
exponential convergence can be achieved when using $hp$ finite elements in polygonal
domains. In \cite{Feist2023}, the author show that Duffy transforms can be used
to overcome the difficulties that arise when computing the stiffness matrix for
\cref{eq:bvp} in $d = 3$ dimensions.

In this chapter, we focus on an approach employed and analyzed in \cite{ADS2021,ADS2023}, 
where it has been shown that a basis consisting of dilated and shifted
$\text{sinc}$-functions can be used to approximate nonlocal problems with Dirichlet
exterior value conditions when the operator is given as a Fourier symbol. The 
method presented in these articles can be seen either a collocation method or as 
a Galerkin method. The method relies on the fact that the operator ${(-\Delta)}^s$ --
acting on functions given on $\mathbb{R}^d$ -- can be equivalently defined as a Fourier
multiplier. More precisely, the identity
\begin{equation}
	\label{eq:def_fourier_flap}
	\mathcal F\{{(-\Delta)}^s u\} = \left|\omega\right|^{2s} \left(\mathcal F u\right)(\omega)
\end{equation}
holds. For a proof, see e.g. \cite{Valdinoci2009} or \cite{Kwasnicki2015} where
the equivalence of ten different definitions is shown.

In theory, the definition in \cref{eq:def_fourier_flap} could be used directly
to construct a numerical method via the discrete Fourier transformation as
seen, e.g., in \cite{AntilBartels2017}. However, this introduces an implicit
periodization of the function which may have undesired effects if homogeneous
exterior value conditions as in \cref{eq:bvp} are desired.

The aforementioned approach presented in \cite{ADS2021,ADS2023} differs from that
as $\text{sinc}$-functions are used as a basis for the numerical approximations. Those functions
combine (i) a reasonable decay in the physical space that can appropriately model the exterior value
conditions and (ii) a simple Fourier transformation which is essentially the indicator
function of a box. While the focus of \cite{ADS2021} is the efficient implementation
of the method along with numerical experiments, the focus of \cite{ADS2023} are
numerical analysis and a convergence proof for the discretization of \cite{ADS2023}
under mild assumptions on $\Omega$ and $f$. We summarize the details that are important
for this article in \cref{sec:sinc_method}.

In this note, we show that -- from a computational point of view -- the
developed $\text{sinc}$-function based techniques can be used to approximate a much broader range of problems.
More specifically, we can compute an operator $L$ as long it is defined through
a Fourier symbol $m(\omega)$ via
\begin{equation}
	\label{eq:def_fourier_operator}
	\mathcal F\{L u\} = m(\omega)\left(\mathcal F u\right)(\omega)
\end{equation}
and solve associated (nonlocal) partial differential equations in the sense of collocation
methods.

As an example, we numerically study  some of the properties of the \emph{logarithmic Laplacian}
$\log(-\Delta)$ which can be defined via the identity
\begin{equation*}
	\mathcal F\{\log(-\Delta) u\} = \log(\left|\omega\right|^2)(\mathcal F u)(\omega).
\end{equation*}
The logarithmic Laplacian formally arises as the derivative $\partial_s\Big|_{s=0}{(-\Delta)}^s$
of the fractional Laplacian at $s = 0$ \cite{ChenWeth2019}. This operator has been
subject to investigations in articles in recent years. In \cite{ChenWeth2019},
the authors study the Dirichlet problem for the logarithmic Laplacian and
the spectral properties of the logarithmic Laplacian are subject of, among others, \cite{LaptevWeth2021}
and bounds for the eigenvalues are given in \cite{Chen2022}.

Recently, the numerical approximation of the logarithmic Laplacian on an interval
has been studied in \cite{HernndezSantamara2025}. In this work, the authors
show the implementation and analysis of a finite element method and they establish
error estimates in appropriately defined weighted function spaces. The analysis is
substantiated by numerical experiments. Furthermore, they show that the eigenvalues
of the discretized stiffness matrix converge to the  eigenvalues of the
logarithmic Laplacian as the spatial discretization parameter $h$ approaches $0$.

In this work, we present an approach suitable for dimensions higher than one, obtaining results that are in good alignment with theoretical predictions. A rigorous numerical analysis of our method is an open problem and ongoing work.

\section{Mathematical Preliminaries}
We introduce some of the basic facts regarding the Fourier transform that are required in the following sections.
For details, we refer to the many textbooks on the topic such as, e.g., \cite{Grafakos2010}.

In this article, we use the conventions 
\begin{equation*}
	\mathcal F u(\omega) = (2\pi)^{-d} \int_{\mathbb{R}^d} u(x) \mathrm{e}^{-i\omega x}\mathop{\mathrm{d}x}, \qquad
	\mathcal F^{-1} \hat u(x) = \int_{\mathbb{R}^d} \hat u(\omega) \mathrm{e}^{\mathrm i\omega x}\mathop{\mathrm d\omega}.
\end{equation*}
for the Fourier transform on $L^2(\mathbb{R}^d)$ and its inverse. The
definition is considered to be extended to the space of tempered distributions accordingly
where needed. An important tool in Fourier analysis that we make use of is the 
Fourier scaling theorem which states that the identity
\begin{equation*}
	\mathcal F(u(x/h))(\omega) = \left|h\right|^{d} \mathcal F u(h\omega)
\end{equation*}
holds for $h\in\mathbb R$.

Initially, our algorithms are implemented so that they work on a domain $\Omega\subset(0,1)^d$. By translational invariance, we can easily see that no change is required to consider $\Omega\subset(-1/2,1/2)^d$. Arbitrarily large domains can then be treated by scaling the Fourier symbols, as justified by the following lemma.

\begin{lemma}
	\label{lemma:domain_scaling}
	Let $L$ a linear operator with symbol $m(\omega)$ and $L_{r/R}$ the operator
	with symbol $m(r/R\Omega)$. Let $u$ solve
	\begin{equation*}
		\begin{aligned}
			L_{r/R} u &= \lambda u \text{ in }B_r\\
			\quad u &= 0\text{ in }\mathbb R^d\setminus B_r
		\end{aligned}
	\end{equation*}
	and $v(x) = u(r/R x)$. Then $v$ solves
	\begin{subequations}
		\begin{align}
			L v &= \lambda v \text{ in }B_R\tag{a}\label{eq:Lv_lambdav_a}\\
			\quad v &= 0\text{ in }\mathbb R^d\setminus B_R\tag{b}\label{eq:Lv_lambdav_b}
		\end{align}
	\end{subequations}
\end{lemma}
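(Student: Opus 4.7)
The plan is to transform the equation for $v$ into the equation for $u$ via the Fourier scaling theorem stated just above the lemma, and then to invoke the hypothesis on $u$.

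First I would dispense with the exterior condition (\ref{eq:Lv_lambdav_b}), which is essentially by definition: for $x\in\mathbb{R}^d\setminus B_R$ we have $|(r/R)x| > r$, so $(r/R)x\notin B_r$, hence $u((r/R)x)=0$, i.e.\ $v(x)=0$. This part is routine.

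The substance is equation (\ref{eq:Lv_lambdav_a}). Setting $h = R/r$ in the Fourier scaling theorem applied to $v(x) = u((r/R)x) = u(x/h)$ gives
\begin{equation*}
\mathcal F v(\omega) \;=\; (R/r)^d\,\mathcal F u\bigl((R/r)\omega\bigr).
\end{equation*}
Then, writing $Lv$ through its Fourier multiplier representation and performing the substitution $\eta = (R/r)\omega$, I would compute
\begin{equation*}
Lv(x) \;=\; \int_{\mathbb{R}^d} m(\omega)\,\mathcal F v(\omega)\,\mathrm e^{i\omega x}\mathop{\mathrm d\omega} \;=\; \int_{\mathbb{R}^d} m\bigl((r/R)\eta\bigr)\,\mathcal F u(\eta)\,\mathrm e^{i\eta(r/R)x}\mathop{\mathrm d\eta},
\end{equation*}
where the Jacobian $(r/R)^d$ from the substitution cancels the prefactor $(R/r)^d$ coming from $\mathcal F v$. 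The right-hand side is precisely $L_{r/R}u$ evaluated at the point $(r/R)x$, so I obtain the clean identity
\begin{equation*}
Lv(x) \;=\; (L_{r/R}u)\bigl((r/R)x\bigr).
\end{equation*}

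Finally, for $x\in B_R$ the point $(r/R)x$ lies in $B_r$, so by hypothesis $(L_{r/R}u)((r/R)x) = \lambda u((r/R)x) = \lambda v(x)$, which is (\ref{eq:Lv_lambdav_a}). The only place that genuinely requires care is keeping track of which direction the scaling goes — i.e.\ ensuring that rescaling the symbol by the factor $r/R$ corresponds to rescaling the function argument by $r/R$ in the same sense — and confirming that the Jacobian from the change of variables exactly cancels the dilation factor in $\mathcal F v$; this is the one spot where a sign or reciprocal mix-up would derail the argument, but otherwise the proof is a direct application of the scaling theorem.
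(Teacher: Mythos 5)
Your proof is correct and follows essentially the same route as the paper's: both reduce (\ref{eq:Lv_lambdav_a}) to the identity $Lv(x) = (L_{r/R}u)((r/R)x)$ via the Fourier multiplier representation, the scaling relation $\hat v(\omega) = (R/r)^d\hat u((R/r)\omega)$, and a change of variables whose Jacobian cancels the dilation factor; you merely run the chain of equalities in the opposite direction (starting from $Lv$ rather than from $\lambda v$).
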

\begin{proof}
	The second part (\ref{eq:Lv_lambdav_b}) is obvious. The first part (\ref{eq:Lv_lambdav_a})
	is shown by a simple computation: let $x\in B_R$, then $r/R x\in B_r$ and
	\begin{align*}
		\lambda v(x) &= \lambda u(r/R x) = L_{r/R} u(r/R x) \\
								 &= \mathcal F^{-1}\{ m(r/R\omega) \hat u(\omega) \}(r/R x) \\
								 &= \int_{\mathbb{R}^d} m(r/R\omega) \hat u(\omega) \mathrm{e}^{\mathrm i r/R  x \cdot\omega} \mathop{\mathrm d\omega} \\
								 &= \int_{\mathbb{R}^d} m(\omega) \hat u(R/r\omega) \mathrm{e}^{\mathrm i x \omega } (r/R)^{-d}\mathop{\mathrm d\omega}  \\
								 &= \int_{\mathbb{R}^d} m(\omega) \hat v(\omega) \mathrm{e}^{\mathrm i x \omega}\mathop{\mathrm d\omega} \\
								 &= L v(x)
	\end{align*}
	where we used that $\hat v(\omega) = (R/r)^d \hat u(R/r\omega)$ as a consequence
	of the Fourier scaling theorem.
\end{proof}
Similarly, we obtain the following lemma.
\begin{lemma}
	\label{lemma:rhs_scaling}
	Let $L$ a linear operator with symbol $m(\omega)$ and $L_{r/R}$ the operator
	with symbol $M(r/R\Omega)$. Let $u$ solve
	\begin{equation*}
		\begin{aligned}
			L_{r/R} u &= f \text{ in }B_r\\
			\quad u &= 0\text{ in }\mathbb R^d\setminus B_r
		\end{aligned}
	\end{equation*}
	and $v(x) = u(r/R x)$. Then $v$ solves
	\begin{equation*}
		\begin{aligned}
			L v &= f(r/R x) \text{ in }B_R\\
			\quad v &= 0\text{ in }\mathbb R^d\setminus B_R
		\end{aligned}.
	\end{equation*}
\end{lemma}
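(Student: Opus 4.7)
The plan is to mimic, nearly verbatim, the proof of \cref{lemma:domain_scaling}, with the role of $\lambda u$ on the right-hand side taken over by $f$. The only nontrivial content of that earlier proof was a Fourier-side computation showing that $L_{r/R}u(r/Rx) = Lv(x)$ for $x\in B_R$; this identity depends only on $u$, $v$, and the relation between the symbols $m(\omega)$ and $m(r/R\,\omega)$, not on what $L_{r/R}u$ is being equated to. So that same chain of equalities should transfer without modification.

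Concretely, first I would dispatch the exterior condition: if $x\in\mathbb{R}^d\setminus B_R$, then $r/R\,x\in\mathbb{R}^d\setminus B_r$, so $v(x)=u(r/R\,x)=0$. Next, for $x\in B_R$ I would use that $r/R\,x\in B_r$ to write $f(r/R\,x) = L_{r/R}u(r/R\,x)$, and then invoke the Fourier-scaling calculation from the proof of \cref{lemma:domain_scaling}:
\begin{align*}
L_{r/R}u(r/R\,x) &= \int_{\mathbb{R}^d} m(r/R\,\omega)\,\hat u(\omega)\,\mathrm{e}^{\mathrm i\, r/R\, x\cdot\omega}\mathop{\mathrm d\omega}\\
&= \int_{\mathbb{R}^d} m(\omega)\,\hat u(R/r\,\omega)(r/R)^{-d}\mathrm{e}^{\mathrm i x\cdot\omega}\mathop{\mathrm d\omega}\\
&= \int_{\mathbb{R}^d} m(\omega)\,\hat v(\omega)\,\mathrm{e}^{\mathrm i x\cdot\omega}\mathop{\mathrm d\omega} = Lv(x),
\end{align*}
using the Fourier scaling theorem in the form $\hat v(\omega) = (R/r)^d\hat u(R/r\,\omega)$. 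Chaining these gives $Lv(x) = f(r/R\,x)$ as required.

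There is essentially no obstacle here: the statement is a direct corollary of the change-of-variables identity already verified in the previous lemma, so the proof should be short. The only thing to note is that the symbol appearing in the statement is written as $M(r/R\,\Omega)$ with capital letters while the earlier lemma used $m(r/R\,\omega)$; I would silently read this as the same convention (a typographical inconsistency), since otherwise the lemma would not be the natural counterpart of \cref{lemma:domain_scaling}.
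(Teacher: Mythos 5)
Your proposal is correct and matches the paper's proof exactly: the paper likewise proves this lemma by repeating the computation from \cref{lemma:domain_scaling}, starting from $f(r/Rx) = L_{r/R}u(r/Rx)$ for $x\in B_R$. Your reading of $M(r/R\,\Omega)$ as a typographical variant of $m(r/R\,\omega)$ is also the intended one.
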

\begin{proof}
	Same as the proof of \cref{lemma:domain_scaling}, starting with $f(r/Rx) =
	L_{r/R}u(r/R x)$ for $x \in B_R$.
\end{proof}

\section{The sinc-method for nonlocal operators}
\label{sec:sinc_method}
In this section, we briefly recap the details needed for the implementation
of the method presented in \cite{ADS2021} to solve nonlocal equations. 

The $\text{sinc}$-function is defined as
\begin{equation}
	\label{eq:def_sinc}
	\text{sinc}(x) \coloneqq \frac{\sin(\pi x)}{\pi x}
	= \int_{[-\pi,\pi]} \frac{1}{2\pi} \mathrm{e}^{\mathrm i\omega \cdot x}\mathop{\mathrm d\omega}
	= \mathcal F^{-1} \left\{ 1	2\pi \chi_{-\pi,\pi} \right\}
\end{equation}
where
\[
	\chi_{[-\pi,\pi]}(\omega) = \begin{cases}
		1&\text{if }\omega\in[-\pi,\pi] \\
		0&\text{otherwise}
	\end{cases}.
\]
We notice from \cref{eq:def_sinc} that the $\text{sinc}$ function is obtained as the inverse Fourier
transform of the indicator function of the interval in $\mathbb R$.

To approximate problems in the form of \cref{eq:bvp} and, more general, operators of
the form of \cref{eq:def_fourier_operator}, we use dilated and shifted tensor
products of the $\text{sinc}$-function defined in \cref{eq:def_sinc}. Namely,
for a positive integer $N$ and a multiindex $k = (k_0,\ldots,k_d)\in\{0,\ldots,N-1\}^d$,
we define the function 
\[
	\varphi^N_k(x) = \prod\limits_{i=1}^d \varphi(Nx - k_i).
\]
For a multiindex $k\in\mathbb Z^d$, we define the grid points $x_k = k/N$.
Note that the basis functions fulfill the property
\[
	\varphi^N_k(x_j) = \delta_{k,j} = \begin{cases}
		1&\text{if }k=j \\
		0&\text{otherwise}
	\end{cases}.
\]
We define the discrete, finite dimensional function space
\[
	\mathbb V_h(\Omega) = \Big\{ v_h(x) = \sum\limits_{k\in\mathbb Z^d} v_k\varphi^N_k(x)\,|\, v_k\in\mathbb R, v_k = 0 \text{ if }x_k\not\in \Omega\Big\}
\]
and solve the discret equation: find $u_h\in\mathbb V_h(\Omega)$ that fulfills
\begin{equation}
	\label{eq:discrete_eq_bvp}
	L u_h(x_k) = f(x_k).
\end{equation}
To solve \cref{eq:discrete_eq_bvp}, we have to discretize the operator $L$. The details
are provided in great detail in \cite{ADS2021,Striet2024}. Briefly, the idea is the following.
Evaluating $L v_h(x_\kappa)$ for $v_h\in V_h(\Omega)$ and a grid point $x_\kappa$
results in computing
\begin{align*}
	{(-\Delta)}^s v_h(x_\kappa) &= L \Big(\sum\limits_{k\in\Omega_h}v_k \varphi^N_k(x_\kappa)\Big) \nonumber\\
			&= \sum\limits_{k\in\Omega_h}v_k \underbrace{\left(L\varphi^N_k\right)(x_\kappa)}
			_{\eqqcolon\Phi^N(\kappa-k)}\nonumber \\
			&= \sum\limits_{k\in{\mathcal I}_N^d}v_k \Phi^N(\kappa-k).
\end{align*}
with ${\mathcal I}_N = \{0,\ldots,N-1\}$. This is a discrete convolution and
can be evaluated efficiently using the discrete fourier transform once $\Phi^N(\kappa -k)$ is known
for all $\kappa - k$. The authors show in the aforementioned works that while evaluating
$\Phi^N_k = L \varphi^N_k$ directly is hard, computing its discrete Fourier
transform can be done as follows: Using the fact that
\[
	\Phi^N(\kappa - k) = L \varphi^N_k(x_\kappa) = L\varphi^N_k(x_\kappa - x_k)
		= \mathcal F^{-1} \left( m(\omega)\hat\varphi^N(\omega) \right)(x_\kappa - x_k)
\]
and the definition of the discrete Fourier transform of size $(2N)^{d}$, we know
for the $k$-th coefficient of the discrete Fourier transform of $\Phi$ that
it can be obtained through the equation
\[
	\widehat\Phi^N_k =
	(2\pi)^{-d}\int\limits_{[-N,N]^d} m(\pi\omega) Y_d\left( \frac{\pi}{N} (\omega - k) \right)\mathop{\mathrm d\omega}
\]
where, for $\omega = (\omega_1,\ldots,\omega_d)^T$,
\[
	Y_d(\omega) = \prod\limits_{i=1}^d Y(\omega_i),\quad
	Y(x)\coloneqq\sum\limits_{j=-N}^{N-1} \mathrm{e}^{\mathrm i j x} = \begin{cases}
		\frac{\mathrm{e}^{-\mathrm i Nx}(\mathrm{e}^{2\mathrm i Nx} - 1)}{\mathrm{e}^{\mathrm i x}-1}&\text{if } \mathrm{e}^{\mathrm i x} -1 \neq 0 \\
		2N&\text{otherwise}
	\end{cases}.
\]
This is still an osciallating integral, but it can be computed approximately as follows.
First, we split the integral up into $(2N)^d$-many integrals over cubes with side length
$1$ and obtain
\begin{equation}
	\label{eq:PHI_hat_sum_of_integrals}
	\widehat\Phi^N_k = \sum\limits_{j\in\I'^d_{2N}} \int_{Q_j} 
	m(\pi\omega) Y_d\left( \frac{\pi}{N} (\omega - k) \right)\mathop{\mathrm d\omega}
\end{equation}
where $\mathcal I_{2N}' \coloneqq \{-N,\ldots N-1\}$ and
$Q_j \coloneqq [j_1, j_1 + 1] \times\cdots \times [j_d, j_d + 1]$.
To evaluate this, we choose a quadrature rule $(x_i,
\alpha_i)_{i=1,\ldots,N_Q}$ is a quadrature rule on $[0,1]^d$, apply it on each of
the cubes and compute the values of $\widehat\Phi^N_k$ via the formula
\begin{equation}
	\label{eq:PHI_hat_Nk}
	\widehat{\Phi}^N_{k} \approx
	(2N)^{-d}\sum\limits_{i=1}^{N_Q}\alpha_i\sum\limits_{j\in{\mathcal I}'^d_{2N}}m(\pi(j+x_i))
		Y_d\left( - \frac{\pi}{N} \left(k-j\right) + \frac{\pi}{N}x_i\right)
\end{equation}
with ${\mathcal I}'_{2N} = \{-N,\ldots,N-1\}$. Here, each of the inner sums has
the structure of a discrete convolution again,
which can be used to implement \cref{eq:PHI_hat_Nk} efficiently. This is still 
computationally demanding, but has to be done only once for each $N$ and each operator.

To implement the above approach, one has to choose a quadrature rule. In principle, there is no particular restriction, except that the same rule has to be used on each cube.
Tensor Gauß-Legendre rules have been used for the relevant implementations in \cite{ADS2021} and it
has been seen in numerical experiments that this is a legitimate
choice for the fractional Laplacian, even though the symbol exhibits reduced
regularity near the origin, making evaluation of \cref{eq:PHI_hat_Nk} more difficult for multi-indices $j$ where one or more components are $0$ or $-1$.

In the case of the logarithmic Laplacian, the expression in
\cref{eq:PHI_hat_Nk} has an even more pronounced singularity near the origin
which thus should be treated properly. This can be achieved for example by
means of a Duffy transform, which in our case consists of a singular domain
transformation to cancel a singularity of an integrand at a corner of the
domain.

We illustrate the procedure by computing the integral
\[
	\int_{Q_0} m(\pi\omega) Y_d\left( \frac{\pi}{N} \omega \right)\mathop{\mathrm d\omega} 
		= \int_0^1\int_0^1 m(\pi\omega_1, \pi\omega_2) \, y_0(\omega_1, \omega_2)\mathrm d\omega_2
		\mathrm d\omega_1
\]
where we abbreviate
\[
	y_0(\omega_1, \omega_2) = Y_d\left( \frac{\pi}{N} \omega \right).
\]
This is precisely the integral over the $j$th cube, $j = (0,0)^T$, for $k = (0,0)^T$
in \cref{eq:PHI_hat_sum_of_integrals}. It is clear that the integrand has the aforementioned 
singularity at $(\omega_1,\omega_2) = (0,0)$ for $m(\omega_1,\omega_2) = \log(\omega_1^2, \omega_2^2)$.
The idea of the Duffy transform is to split the integral over the cube into two integrals
over triangles via
\begin{align}
	\label{eq:int_triangle_split}
	&\int_0^1\int_0^1 m(\pi\omega_1, \pi\omega_2) \, y_0(\omega_1, \omega_2)\mathrm d\omega_2
		\mathrm d\omega_1 \nonumber\\
		=& \int_0^1\int_0^{\omega_1} m(\pi\omega_1, \pi\omega_2) \, y_0(\omega_1, \omega_2)\mathrm d\omega_2
		\mathrm d\omega_1 \\
		 &\quad + \int_0^1\int_0^{\omega_2} m(\pi\omega_1, \pi\omega_2) \, y_0(\omega_1, \omega_2)\mathrm d\omega_1
		\mathrm d\omega_2.
\end{align}
so that the singularity is in the corner of one of them. Then, the singularity
in the first integral is removed by transforming it to an integral over a cube again
as
\begin{align*}
	&\int_0^1\int_0^{\omega_1} m(\pi\omega_1, \pi\omega_2) \, y_0(\omega_1, \omega_2)\mathrm d\omega_2
		\mathrm d\omega_1 \\
		=& \int_0^1\int_0^1 m(\pi\omega_1, \pi\omega_1\eta) \, y_0(\omega_1, \omega_1\eta) 
			\omega_1\mathrm d \omega_1 \mathrm d\eta.
\end{align*}
The second integral is transformed the same way as
\begin{align*}
	&\int_0^1\int_0^{\omega_2} m(\pi\omega_1, \pi\omega_2) \, y_0(\omega_1, \omega)\mathrm d\omega_1
		\mathrm d\omega_2 \\
	=& \int_0^1\int_0^{1} m(\pi\omega_2\eta, \pi\omega_2) \, y_0(\omega_2\eta, \omega)\omega_2\mathrm d\eta
		\mathrm d\omega_2
\end{align*}
and we obtain 
\begin{align}
	\label{eq:int_duffy_transformed}
	&\int_0^1\int_0^1 m(\pi\omega_1, \pi\omega_2) \, y_0(\omega_1, \omega)\mathrm d\omega_2
		\mathrm d\omega_1 \nonumber\\
		=& \int_0^1\int_0^1 m(\pi\omega_1, \pi\omega_1\eta) \, y_0(\omega_1, \omega_1\eta) 
			\omega_1\mathrm d \omega_1 \mathrm d\eta \nonumber\\
		 &\qquad+ \int_0^1\int_0^{1} m(\pi\omega_2\eta, \pi\omega_2) \, y_0(\omega_2\eta, \omega)\omega_2\mathrm d\eta
		\mathrm d\omega_2 \nonumber\\
		=& \int_0^1\int_0^1 \big(
				m(\pi\omega, \pi\eta\omega)y(\omega,\eta \omega) + m(\pi\eta\omega, \pi\omega)y(\eta\omega,\omega)
		\big)\omega \mathrm d\omega \mathrm d\eta.
\end{align}
by plugging both into \cref{eq:int_triangle_split}, renaming $\omega_1 = \omega$
in the first integral, $\omega_2 = \omega$ and changing the order of integration in the 
second integral and summarizing both into one integral over $(0,1)^2$.

As $Y(\cdot)$ is smooth and bounded, the singularity results only from $m(\omega) = \log(\left|\omega\right|^2)$.
The transform as described in \cref{eq:int_duffy_transformed} transforms this singularity
into an expression of the form
\[
	\omega m(\pi\omega, \pi\omega\eta) = \omega\log( (\pi\omega)^2 + (\pi\omega\eta)^2 )
		= \omega \log(\pi^2 (1+\eta^2) \omega^2)
\]
which remains bounded and can be integrate using standard quadrature formula.

However, as this affects only the cubes adjacent to the origin, we only want to
apply an appropriate rule on these cubes, but we have to apply the same rule to
all cubes in order to compute the coefficients via \cref{eq:PHI_hat_Nk} which
is necessary for an efficient algorithm.

We solve this issue by first computing $\widehat\Phi^N_k$ for all $k$ via \cref{eq:PHI_hat_Nk}.
Then, we subtract the summands that belong to cubes near the origin from each of
the coefficients and compute those again using a quadrature rule on the integral
transformed to the form of \cref{eq:int_duffy_transformed}.

\section{Numerical Experiments}
In this section, we show the results of some new numerical experiments. This
includes numerical error rates for the Dirichlet problem with integral fractional
Laplacian in $d = 4$ spatial dimensions in \cref{sub:numerical_flap_4d},
and experiments regarding the eigenvalues of the logarithmic Laplacian
on the ball in \cref{sub:numerical_evals_loglap} and the Dirichlet problem for
the logarithmic Laplacian on the ball in \cref{sub:numerical_dirichlet_loglap}.

\subsection{The Dirichlet problem for the fractional Laplacian on the ball}
\label{sub:numerical_flap_4d}
A standard problem to test numerical methods for the integral fractional
Laplacian is to find $u$ that fulfills
\begin{equation}
	\label{eq:frac_torsion}
	\begin{aligned}
		{(-\Delta)}^s u &= 1\text{ in }B_1\\
		\quad u &= 0\text{ in }\mathbb R^d\setminus B_1
	\end{aligned}
\end{equation}
which has the analytic solution
\[
	u = C_u(d,s)  \max\left(0, (1 - |x|^2)\right)^s,
		\quad C_u(d,s) = \frac{\Gamma\left( d/2\right)}{2^{2s} \Gamma\left( d/2+s \right)\Gamma(1+s)},
\]
see e.g. \cite{Bucur2016} also for the exact value of $C_u(d,s)$. Error decay
rates for the $\text{sinc}$-method for this problem in the energy norm and in the
$L^2$ norm have been computed numerically in \cite{ADS2021} in $d = 2$
and $d = 3$ spatial dimensions and proven later in \cite{ADS2023} for 
arbitrary spatial dimensions. Although the proofs are only for the error decay rate
in the energy norm, the numerically computed decay rates in $L^2$ match
well with what has been proven for finite element methods and are $\sim h^{\min(1/2+s, 1)}$.

In principle, the method can be implemented in arbitrary spatial dimensions, although
the \enquote{curse of dimensionality} is a problem in higher dimensions. Still,
we show numeric error decay rates in the $L^2$-norm in $d = 4$ with $h$ up to $2^{-7}$
which corresponds to approximately $54.3\cdot 10^6$ unknowns in $\Omega$.

We show the errors that we compute in \cref{fig:errors_4d}. The results
suggest that we obtain the same error decay rates as in $d = 2$ and $d = 3$ spatial
dimensions.

\begin{figure}
	\resizebox{\textwidth}{!}{\input{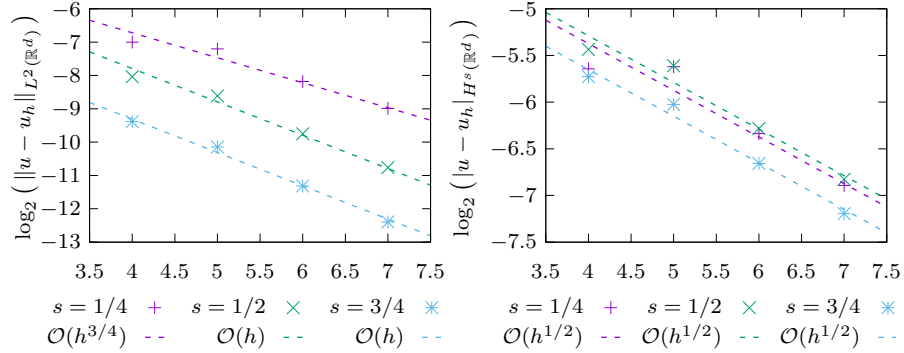}}
	\caption{The numerical errors that we compute for the problem in \cref{eq:frac_torsion} 
		in $d = 4$ spatial dimensions in the $H^s$ seminorm (left) and in the $L^2$ norm (right).
		The dashed lines indicate that we obtain the same error decay rates as in $d = 2$ and $d = 3$.}
	\label{fig:errors_4d}
\end{figure}

\subsection{The eigenvalues of the logarithmic Laplacian on the ball}
\label{sub:numerical_evals_loglap}
As has been noted in \cite{LaptevWeth2021}, the eigenvalues of the logarithmic Laplacian
with Dirichlet boundary conditions on the unit ball $B_R$ depend on the radius $R$.
Furthermore, for specific radii $R_\ell$, zero is an eigenvalue of the logarithmic
Laplacian. The eigenvalues of the logarithmic Laplacian on the ball $B_R$ 
are then
\begin{equation}
	\label{eq:eval_scaling}
	\lambda^{(\ell)} = 2\log(R/R_\ell),
\end{equation}
see \cite[Lemma 2.5]{LaptevWeth2021}, \cite{HernndezSantamara2025}.

In a first numerical experiment, we aim to approximate compute the smallest eigenvalues of the
approximation of the Dirichlet logarithmic Laplacian on $B_R$ for different values of $R > 0$.
Once known, these can be used to simplify implementations, we seek the eigenvalues of the operator with
symbol $\log\big(\left|r/R\omega\right|^2\big)$ on $B_r \subset (0,1)^2$ to obtain the eigenvalues
of the (approximated) logarithmic Laplacian with symbol $2\log(\left|\omega\right|)$ on $B_R \subset \mathbb{R}^d$ as
justified by \cref{lemma:domain_scaling}.

To compute the eigenvalues $\lambda_h^{(\ell)}$, we setup the operator with $N = 2^{10}$ grid points in
each spatial direction and a $7\times 7$-point tensor Gauß-Legendre rule. We use
the Spectra Library \cite{Spectra} to compute the eigenvalues of $\Phi^N$ then. 

We remark that we are currently unable to provide a proof that the computed eigenvalues indeed
converge to the exact eigenvalues of the logarithmic Laplacian. However, it is worth
to note that the pairs of eigenvalues $\lambda_h^{(\ell)}$ and eigenvectors $\vec v_h^{(\ell)}$
that fulfill $(\Phi^N\vec v^{(\ell)})_k = (\lambda_h\vec v^{(\ell)})_k$ for $k\in\Omega_N$
do represent eigenvalues and eigenfunctions of the logarithmic Laplacian in
the following sense: Let $v^{(\ell)}_h$ the $\text{sinc}$-function associated with
the the coefficient vectors $\vec v^{(\ell)}$, then
\[
	\log(-\Delta) v_h(x_k) = (\Phi^N \vec v^{(\ell)})_k = (\lambda^{(\ell)}_h\vec v^{(\ell)})_k
	= \lambda_h^{(\ell)} v_h^{(\ell)}(x_k)\quad \text{ for }x_k\in B_R.
\]
We show the eigenvalues that we compute for different values of $R$ in \cref{fig:R_evals}, 
where we also see that the eigenvalues scale logarithmically as expected.

\begin{figure}
	\begin{center}
		\input{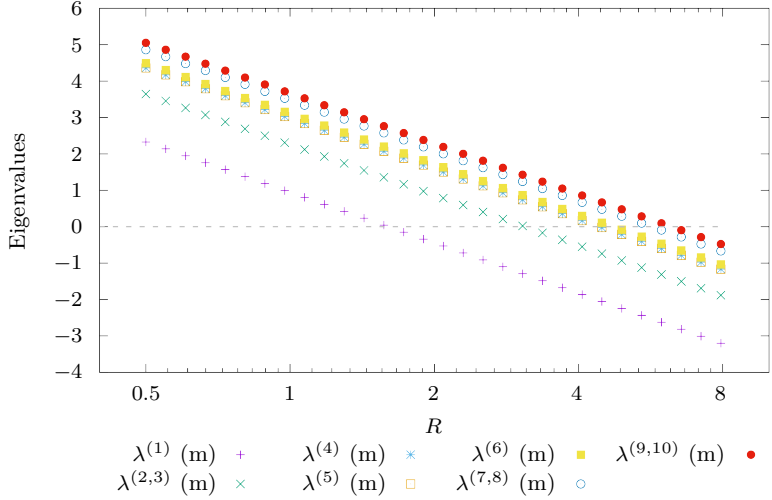}
	\end{center}
	\caption{The first Dirichlet eigenvalues numerically computed on $B_R\subset\mathbb R^2$
		using the $\text{sinc}$ method. We notice that the eigenvalues scale logarithmically
		as expected.}
		\label{fig:R_evals}
\end{figure}

In a next numerical experiment, we aim to seek the exact values of $R$ when $0$
is an eigenvalue of the logarithmic Laplacian on $B_R$ with Dirichlet exterior 
value conditions. These values could be used to estimate the Eigenvalues on balls
with arbitrary radii via \cref{eq:eval_scaling}.

To find the radius for which the $\ell$th eigenvalue is $0$, we take (i) the
biggest $R$ for which $\lambda^{(\ell)}$ is greater than $0$ and (ii) the
smallest $R$ for which $\lambda^{(\ell)}$ is smaller than $0$ from \cref{fig:R_evals}
and iteratively refine them until we find $R_\ell$ so that $\lambda^{(\ell)} =
0$. We show the values of $R_\ell$ in \cref{tab:ell_R_ell}.
Furthermore, we show some of the eigenvectors that belong to the respective
$0$-eigenvalue in \cref{fig:eigenvalues_lambda0}.

\begin{table}
	\caption{The values of $R_\ell$ for which the $\ell$th eigenvalue is $0$, rounded
	to 4 decimal places.}
	\label{tab:ell_R_ell}
	\small
	\begin{center}
	\begin{tabular}{r|c|c|c|c|c|c|c|c|c|c|c|}
		$\ell$   & 1      & 2      & 3      & 4      & 5      & 6      & 7      & 8      & 9      & 10 \\
		$R_\ell$ & 1.6015 & 3.0910 & 3.0910 & 4.4221 & 4.4251 & 4.7248 & 5.6846 & 5.6846 & 6.2476 & 6.2476
	\end{tabular}
	\end{center}
\end{table}

\begin{figure}
	\begin{center}
		\resizebox{\textwidth}{!}{\input{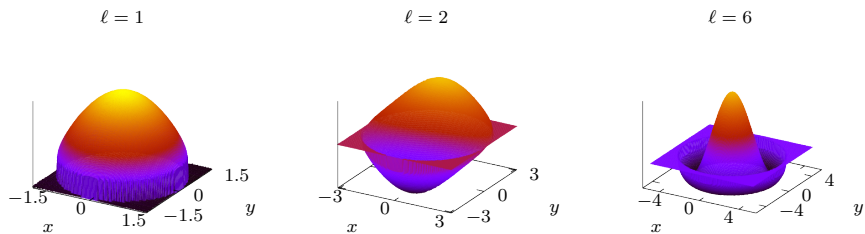}}
	\end{center}
	\caption{The eigenvectors that belong to the eigenvalue $\lambda_\ell = 0$ for
		the radii $R_\ell$ in \cref{tab:ell_R_ell}.
	}
	\label{fig:eigenvalues_lambda0}

\end{figure}

\subsection{The Dirichlet Problem for the logarithmic Laplacian}
\label{sub:numerical_dirichlet_loglap}
In a second experiment, we show numeric solutions to the Dirichlet problem
for the logarithmic Laplacian on the Ball with radius $R$ in $\mathbb{R}^d$. More precisely,
we aim to find numeric solutions to the problem 
\begin{equation*}
	\begin{aligned}
		\log(-\Delta) u_h(x_k) &= f(x_k)\text{ if }x_k \in \Omega\\
		u_h(x_k) &= 0\text{ if }x_k\not\in\Omega
	\end{aligned}
\end{equation*}
where $\Omega\subset(0,1)^d$ and we use the already described scaling procedure
to transform the problem to a problem on $B_R$, see \cref{lemma:rhs_scaling}.
We choose $f = 1$ in $\Omega$
and solve the problem exemplarily for $\Omega = B_R$, $R = 4$ and $R = 6$. We
show the solutions we obtain numerically in \cref{fig:solutions_loglap_dirichlet}.

As a numerical analysis for our method is still pending, we perform a numerical
error analysis. To do so, we compute a solution $u_{h_{\min}}$ at a fine
spatial resolution $h_{\min} = 2^{-12}$ and compare this solution to solutions
$u_h$ computed at coarser solutions. We approximate the $L^2$-error as
\[
	\left\|u_h - u_{h_{\min}}\right\|_{L^2(\mathbb R^d)} \approx
		\Big(\frac{1}{h^2} \sum\limits_{k\in\mathbb Z^d} (u_h(x_k) - u_{h_{\min}}(x_k))^2\Big)^{1/2}.
\]
The error decays with a rate of approximately $h^{-1}$ in this experimental
analysis as shown in \cref{fig:loglap_numeric_error_ana}

\begin{figure}
\begin{center}
	\resizebox{\textwidth}{!}{\input{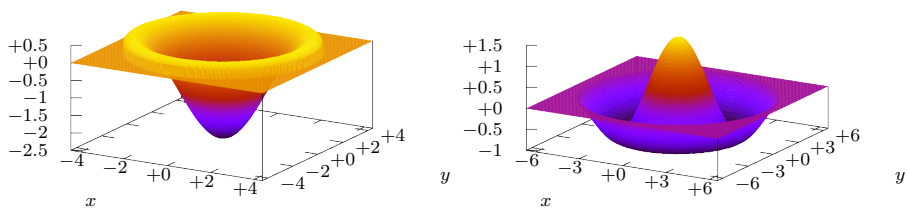}}
\end{center}
\caption{The solutions we obtain to the Dirichlet problem for the logarithmic Laplacian
on balls with radius $R = 4$ (left) and $R = 6$ (right). As expected, the solutions
oscillate for larger values of $R$.}
\label{fig:solutions_loglap_dirichlet}
\end{figure}

\begin{figure}
\begin{center}
	\input{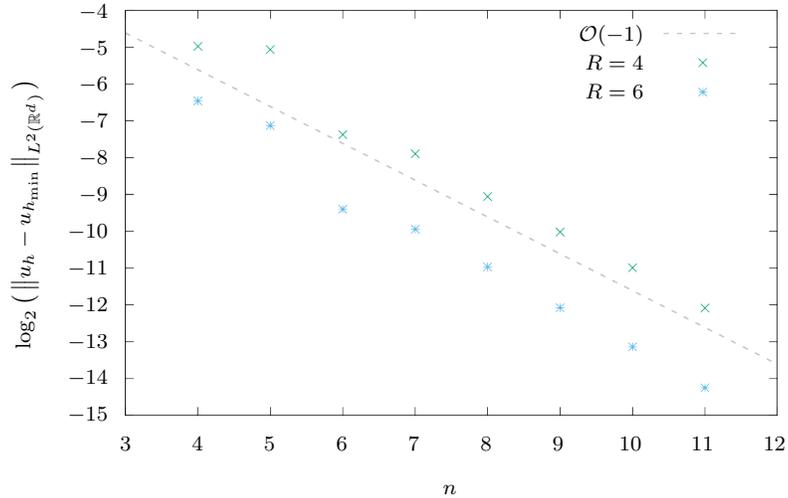}
\end{center}
\caption{The approximately computed
	$\left\|\,\cdot\,\right\|_{L^2(\mathbb R^d)}$
	errors where $u_h$ is the numerical solution
	obtained with spatial resolution $h = 2^{-n}$ and $u_{h_{\min}}$ is the numerical
	solution obtained at the minimal spatial resolutoin $h_{\min} = 2^{-12}$.
	For this experiment, the error rate is approximately $-1$ as indicated by the
	dashed grey line.}
\label{fig:loglap_numeric_error_ana}
\end{figure}

\bibliographystyle{spmpsci}
\bibliography{bibliography}

\begin{thebibliography}{10}
\providecommand{\url}[1]{{#1}}
\providecommand{\urlprefix}{URL }
\expandafter\ifx\csname urlstyle\endcsname\relax
  \providecommand{\doi}[1]{DOI~\discretionary{}{}{}#1}\else
  \providecommand{\doi}{DOI~\discretionary{}{}{}\begingroup
  \urlstyle{rm}\Url}\fi

\bibitem{Borthagaray2017}
Acosta, G., Borthagaray, J.P.: A fractional {L}aplace {E}quation: {R}egularity
  of {S}olutions and {F}inite {E}lement {A}pproximations.
\newblock SIAM Journal on Numerical Analysis \textbf{55}(2), 472–495 (2017).
\newblock \doi{10.1137/15m1033952}.
\newblock \urlprefix\url{http://dx.doi.org/10.1137/15M1033952}

\bibitem{Ainsworth2017}
Ainsworth, M., Glusa, C.: Aspects of an adaptive finite element method for the
  fractional {L}aplacian: A priori and a posteriori error estimates, efficient
  implementation and multigrid solver.
\newblock Computer Methods in Applied Mechanics and Engineering \textbf{327},
  4--35 (2017).
\newblock \doi{https://doi.org/10.1016/j.cma.2017.08.019}.
\newblock
  \urlprefix\url{https://www.sciencedirect.com/science/article/pii/S0045782517305996}.
\newblock Advances in Computational Mechanics and Scientific Computation—the
  Cutting Edge

\bibitem{Ainsworth2018}
Ainsworth, M., Glusa, C.: Towards an efficient finite element method for the
  integral fractional {L}aplacian on polygonal domains.
\newblock In: J.~Dick, F.Y. Kuo, H.~Wo{\'{z}}niakowski (eds.) Contemporary
  Computational Mathematics - A Celebration of the 80th Birthday of Ian Sloan,
  pp. 17--57. Springer International Publishing, Cham (2018).
\newblock \doi{10.1007/978-3-319-72456-0_2}.
\newblock \urlprefix\url{https://doi.org/10.1007/978-3-319-72456-0_2}

\bibitem{AntilBartels2017}
Antil, H., Bartels, S.: {S}pectral {A}pproximation of {F}ractional {PDE}s in
  {I}mage {P}rocessing and {P}hase {F}ield {M}odeling.
\newblock Computational Methods in Applied Mathematics \textbf{17} (2017).
\newblock \doi{10.1515/cmam-2017-0039}

\bibitem{ADS2021}
Antil, H., Dondl, P., Striet, L.: Approximation of integral fractional
  {L}aplacian and fractional {PDE}s via sinc-basis.
\newblock SIAM Journal on Scientific Computing \textbf{43}(4), A2897--A2922
  (2021).
\newblock \doi{10.1137/20M1374122}.
\newblock \urlprefix\url{https://doi.org/10.1137/20M1374122}

\bibitem{ADS2023}
Antil, H., Dondl, P.W., Striet, L.: Analysis of a sinc-{G}alerkin {M}ethod for
  the fractional {L}aplacian.
\newblock SIAM Journal on Numerical Analysis \textbf{61}(6), 2967--2993 (2023).
\newblock \doi{10.1137/22M1542374}.
\newblock \urlprefix\url{https://epubs.siam.org/doi/abs/10.1137/22M1542374}

\bibitem{Bonito2019}
Bonito, A., Lei, W., Pasciak, J.E.: Numerical approximation of the integral
  fractional {L}aplacian.
\newblock Numerische Mathematik \textbf{142}(2), 235--278 (2019).
\newblock \doi{10.1007/s00211-019-01025-x}.
\newblock \urlprefix\url{https://doi.org/10.1007/s00211-019-01025-x}

\bibitem{Bucur2016}
Bucur, C., Valdinoci, E.: Nonlocal diffusion and applications.
\newblock Springer (2016).
\newblock
  \urlprefix\url{https://www.ebook.de/de/product/26030926/claudia_bucur_enrico_valdinoci_nonlocal_diffusion_and_applications.html}

\bibitem{Burkardt2020}
Burkardt, J., Wu, Y., Zhang, Y.: {A Unified Meshfree Pseudospectral Method for
  Solving Both Classical and Fractional PDEs}.
\newblock SIAM Journal on Scientific Computing \textbf{43}(2), A1389--A1411
  (2021).
\newblock \doi{10.1137/20M1335959}.
\newblock \urlprefix\url{https://doi.org/10.1137/20M1335959}

\bibitem{Chen2022}
Chen, H., Véron, L.: Bounds for eigenvalues of the dirichlet problem for the
  logarithmic laplacian.
\newblock Advances in Calculus of Variations \textbf{16}(3), 541–558 (2022).
\newblock \doi{10.1515/acv-2021-0025}.
\newblock \urlprefix\url{http://dx.doi.org/10.1515/acv-2021-0025}

\bibitem{ChenWeth2019}
Chen, H., Weth, T.: The dirichlet problem for the logarithmic laplacian.
\newblock Communications in Partial Differential Equations \textbf{44}(11),
  1100–1139 (2019).
\newblock \doi{10.1080/03605302.2019.1611851}.
\newblock \urlprefix\url{http://dx.doi.org/10.1080/03605302.2019.1611851}

\bibitem{Faustmann2023}
Faustmann, M., Marcati, C., Melenk, J.M., Schwab, C.: {Exponential Convergence
  of \(hp\)-FEM for the Integral Fractional Laplacian in Polygons}.
\newblock SIAM Journal on Numerical Analysis \textbf{61}(6), 2601–2622
  (2023).
\newblock \doi{10.1137/22m152493x}.
\newblock \urlprefix\url{http://dx.doi.org/10.1137/22M152493X}

\bibitem{Feist2023}
Feist, B., Bebendorf, M.: Fractional {L}aplacian {\textendash} {Q}uadrature
  {R}ules for {S}ingular {D}ouble {I}ntegrals in 3d.
\newblock Computational Methods in Applied Mathematics \textbf{23}(3), 623--645
  (2023).
\newblock \doi{10.1515/cmam-2022-0159}.
\newblock \urlprefix\url{https://doi.org/10.1515/cmam-2022-0159}

\bibitem{Grafakos2010}
Grafakos, L.: {C}lassical {F}ourier {A}nalysis.
\newblock Graduate Texts in Mathematics. Springer, New York, NY (2010)

\bibitem{Han2022}
Han, R., Wu, S.: A monotone discretization for integral fractional {L}aplacian
  on bounded {L}ipschitz {D}omains: {P}ointwise error estimates under {H}ölder
  regularity.
\newblock SIAM Journal on Numerical Analysis \textbf{60}(6), 3052--3077 (2022).
\newblock \doi{10.1137/21M1448239}.
\newblock \urlprefix\url{https://doi.org/10.1137/21M1448239}

\bibitem{HernndezSantamara2025}
Hernández-Santamaría, V., Jarohs, S., Saldaña, A., Sinsch, L.: Fem for
  1d-problems involving the logarithmic laplacian: Error estimates and
  numerical implementation.
\newblock Computers \& Mathematics with Applications \textbf{192}, 189–211
  (2025).
\newblock \doi{10.1016/j.camwa.2025.05.013}.
\newblock \urlprefix\url{http://dx.doi.org/10.1016/j.camwa.2025.05.013}

\bibitem{Huang2014}
Huang, Y., Oberman, A.: {N}umerical {M}ethods for the fractional {L}aplacian: A
  finite difference-quadrature approach.
\newblock SIAM Journal on Numerical Analysis \textbf{52}(6), 3056--3084 (2014).
\newblock \doi{10.1137/140954040}.
\newblock \urlprefix\url{https://doi.org/10.1137/140954040}

\bibitem{Huang2016}
{Huang}, Y., {Oberman}, A.: Finite difference methods for fractional
  {L}aplacians.
\newblock arXiv e-prints arXiv:1611.00164 (2016)

\bibitem{Kwasnicki2015}
Kwaśnicki, M.: Ten equivalent definitions of the fractional {L}aplace
  operator.
\newblock Fractional Calculus and Applied Analysis \textbf{20} (2015).
\newblock \doi{10.1515/fca-2017-0002}

\bibitem{LaptevWeth2021}
Laptev, A., Weth, T.: Spectral properties of the logarithmic laplacian.
\newblock Analysis and Mathematical Physics \textbf{11}(3) (2021).
\newblock \doi{10.1007/s13324-021-00527-y}.
\newblock \urlprefix\url{http://dx.doi.org/10.1007/s13324-021-00527-y}

\bibitem{Spectra}
Qiu, Y.: {Spectra: A C++ Library for Large Scale Eigenvalue Problems}.
\newblock \url{https://spectralib.org/} (2025)

\bibitem{Rosenfeld2019}
Rosenfeld, J.A., Rosenfeld, S.A., Dixon, W.E.: A mesh-free pseudospectral
  approach to estimating the fractional laplacian via radial basis functions.
\newblock Journal of Computational Physics \textbf{390}, 306--322 (2019).
\newblock \doi{https://doi.org/10.1016/j.jcp.2019.02.015}.
\newblock
  \urlprefix\url{https://www.sciencedirect.com/science/article/pii/S0021999119301329}

\bibitem{Striet2024}
Striet, L.: Approximation of some nonlocal operators in a sinc-basis (2024).
\newblock \doi{10.6094/UNIFR/246834}.
\newblock \urlprefix\url{https://freidok.uni-freiburg.de/data/246834}

\bibitem{Valdinoci2009}
Valdinoci, E.: From the long jump random walk to the fractional {L}aplacian
  (2009)

\bibitem{Zhuang2022}
{Zhuang}, Q., {Heryudono}, A., {Zeng}, F., {Zhang}, Z.: {R}adial {B}asis
  {M}ethods for {I}ntegral {F}ractional {L}aplacian {U}sing {A}rbitrary
  {R}adial {F}unctions.
\newblock \textit{Available at SSRN:} https://ssrn.com/abstract=4283586 (2022)

\end{thebibliography}

\end{document}